\documentclass[preprint,12pt,authoryear]{elsarticle}

\usepackage{tabularx} 
\usepackage{enumitem}
\usepackage{amsmath,amssymb,amsfonts}

\usepackage{longtable}
\usepackage{booktabs}
\usepackage{graphicx} 
\usepackage{subcaption} 
\usepackage{algorithm}
\usepackage{algpseudocode}
\usepackage{lscape} 
\usepackage[utf8]{inputenc}
\usepackage{booktabs}  
\usepackage{tablefootnote}
\usepackage{xcolor}
\usepackage[bookmarks=false]{hyperref}
\usepackage{amssymb}
\usepackage{amsmath}
\newtheorem{theorem}{Theorem}[section]
\newtheorem{lemma}{Lemma}[section]
\newtheorem{proof}{Proof}[section]

\journal{Elsevier}

\begin{document}

\begin{frontmatter}




\title{\texorpdfstring{$PG-NODE^{TB}$}{PG-NODE\textsuperscript{TB}}: Physics-Guided Neural Ordinary Differential Equations for Tuberculosis Transmission Dynamics}


\author[a,b]{Selain K. Kasereka}
\author[c]{Eric M. Mafuta}
\author[d]{Fadi Al Machot}
\author[e]{Emmanuel M. Kabengele}
\author[a]{Jean Chamberlain Chedjou}
\author{Kyandoghere Kyamakya\corref{cor1}\fnref{a,b}}
 \ead{kyandoghere.kyamakya@aau.at}
 \cortext[cor1]{Corresponding author:}

\address[a]{Institute of Smart Systems Technologies, University of Klagenfurt, Klagenfurt, Austria}
\address[b]{ABIL-LAB, University of Kinshasa, Kinshasa, Democratic Republic of the Congo}
\address[c]{School of Public Health, Faculty of Medicine, University of Kinshasa, Kinshasa, Democratic Republic of the Congo} 
\address[d]{Department of Data Science, Norwegian University of Life Sciences, Norway}
\address[e]{Institute of Global Health, Faculty of Medicine, University of Geneva, Geneva, Switzerland}

\begin{abstract}
Tuberculosis (TB) remains a leading global infectious disease, causing approximately 1.3 million deaths and 10.6 million new infections annually. Classical compartmental ODE models are the standard epidemiological tool for TB, yet their fixed-parameter structure cannot adapt to time-varying dynamics, unmodeled effects, or heterogeneous real-world data. This paper presents a methodological framework and proof-of-concept for applying Physics-Guided Neural Ordinary Differential Equations (PG-NODE) to TB transmission modeling within a SLIR (Susceptible, Latent, Infectious, Recovered) compartmental framework. We perform a rigorous mathematical analysis of the SLIR model, including derivation of the basic reproduction number $\mathcal{R}_0$, equilibrium analysis, and normalized sensitivity indices. We then reformulate the SLIR system as a PG-NODE, preserving compartmental conservation laws and biological constraints while enabling neural network components to learn unknown or time-varying rate functions from data. Three simulation scenarios illustrate the framework's intended capabilities: (i) adaptive tracking of time-varying transmission rates, (ii) correcting for unmodeled treatment and relapse dynamics with 27\% lower RMSE than the classical SLIR, and (iii) comparative forecasting of competing intervention policies over a 20-year horizon. Simulation results indicate that PG-NODE has strong potential for improving predictive accuracy while maintaining epidemiological interpretability; full adjoint-based training on real WHO surveillance data is identified as the key next step for empirical validation.
\end{abstract}

\begin{graphicalabstract}
\includegraphics[width=\linewidth]{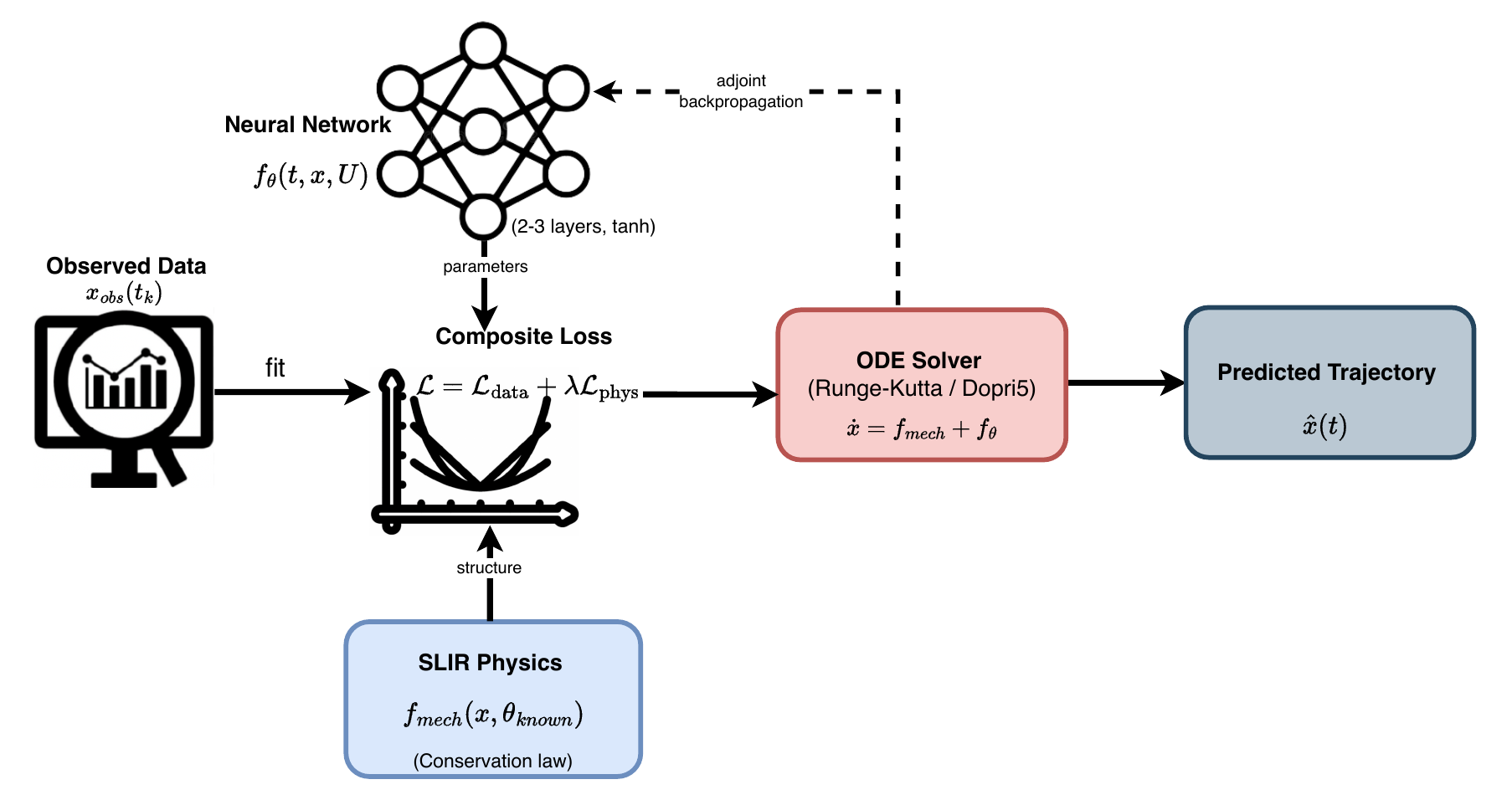}
\end{graphicalabstract}

\begin{highlights}
\item A PG-NODE framework is proposed that embeds SLIR mechanistic structure into neural ODEs while preserving biological and conservation constraints.
\item Rigorous mathematical analysis of the SLIR model provides closed-form equilibrium conditions, stability criteria, and intervention sensitivity rankings.
\item Treatment scale-up is identified as the most impactful single intervention for reducing TB transmission at the population level.
\item A learnable neural correction term compensates for structural model misspecification without requiring additional explicit compartments.
\item PG-NODE adaptively tracks non-stationary transmission dynamics driven by public health interventions, unlike fixed-parameter ODE models.
\item A PG-NODE-guided combined intervention strategy achieves superior long-term epidemic control and projects a trajectory toward TB elimination.
\end{highlights}

\begin{keyword}


tuberculosis; PG-NODE; compartmental modeling; physics-guided machine learning; epidemic forecasting; basic reproduction number.

\end{keyword}

\end{frontmatter}



\section{Introduction \label{sec:into}}

Tuberculosis (TB), caused by Mycobacterium tuberculosis, remains one of the most consequential infectious diseases of the modern era. The World Health Organization reports that in 2024 TB caused an estimated 1.3 million deaths and 10.6 million new infections, with the greatest burden in South-East Asia (45\%), Africa (23\%), and the Western Pacific (18\%) \cite{WHO2025TB}. Despite effective chemotherapy since the 1950s and the BCG vaccine, global elimination targets set for 2035 remain far from reach, partly due to multi-drug-resistant TB (MDR-TB) and the TB/HIV syndemic \cite{uplekar2015s}.

Mathematical compartmental models have been central to understanding TB transmission dynamics since Waaler et al.\ \cite{Waaler1962}. The Kermack-McKendrick framework \cite{Kermack1927} gave rise to SEIR-type models that became the workhorses of TB modeling \cite{Blower1995,Anderson1991,ochieng2025mathematical,hattamurrahman2026mathematical}. Subsequent decades saw rapid elaboration: fast and slow latency stratification \cite{Feng2000,kasereka2020analysis}, explicit treatment and MDR-TB dynamics \cite{Castillo2002,Cohen2004}, behavioral and stigma parameters \cite{chikovore2017tb,kabunga2020stochastic}, age structure \cite{Hethcote2000}, and co-infection with HIV \cite{Roeger2009,raza2025mathematical}. Yet, despite this richness, all classical ODE models share a structural limitation: parameters are assumed time-invariant, estimated offline from aggregate data, and cannot adapt to non-stationary dynamics, unmodeled heterogeneity, or real-time surveillance streams.

Physics-Guided Neural Ordinary Differential Equations (PG-NODE) \cite{Chen2018} address these limitations by embedding mechanistic ODE structure into a neural network architecture. The key principle is hybridization: known epidemiological equations are preserved explicitly, while unknown, time-varying, or misspecified components are learned from data. This framework is particularly timely in the context of mobile and pervasive computing, where connected health systems generate continuous surveillance data streams that could drive real-time adaptive epidemic models.

This paper applies PG-NODE to a SLIR TB transmission model as a proof-of-concept methodological study. Our contributions are: (1) a rigorous mathematical analysis of the SLIR model including $\mathcal{R}_0$ derivation and sensitivity analysis; (2) a PG-NODE formulation preserving SLIR physics with a concrete training objective and architecture specification; (3) three simulation scenarios illustrating the framework's comparative advantages over classical SLIR; and (4) a discussion of current limitations, including the absence of real-data training, and a roadmap for empirical validation.

The remainder is organized as follows. Section~\ref{sec:methodo} presents the methodology. Section~\ref{sec:simulation} reports the simulations. Section~\ref{sec:discussion} discusses the findings, and Section~\ref{sec:conclusion} concludes.

\section{Methodology \label{sec:methodo}}

\subsection{Model Description}

We consider a compartmental SLIR model for tuberculosis in a closed population with vital dynamics. The total population at time $t$ is
\begin{equation}
    N(t) = S(t) + L(t) + I(t) + R(t),
    \label{eq:TotalPop}
\end{equation}
where $S(t)$, $L(t)$, $I(t)$, and $R(t)$ denote the susceptible, latently infected, infectious, and recovered subpopulations, respectively. Figure \ref{fig:DiaTB} illustrates the transmission dynamics. The governing ODE system is presented as shown in Eq. \eqref{eq:systemTB}.

\begin{figure*}[ht!]
\centering
    \includegraphics[scale=.5]{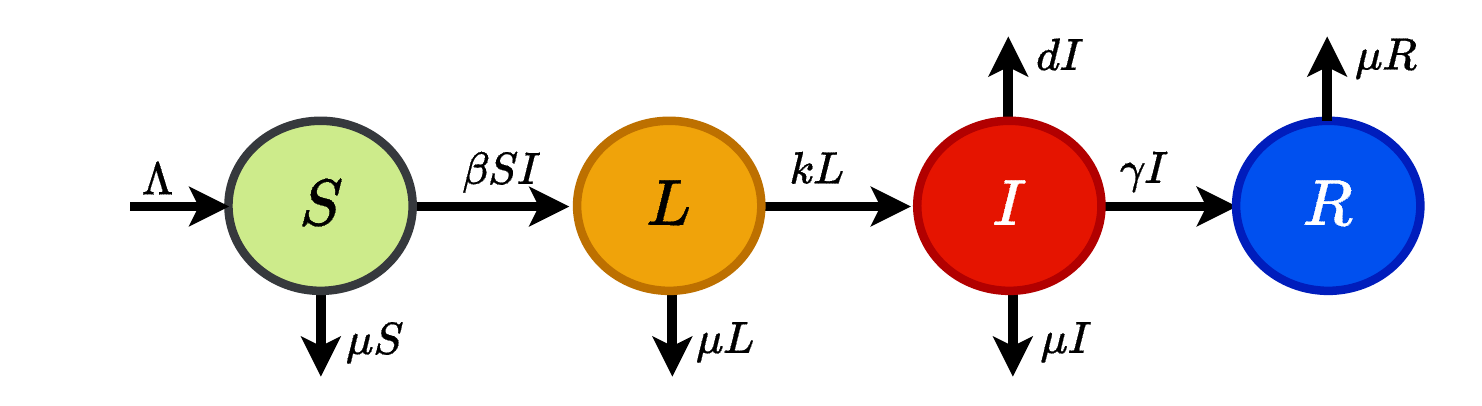}
    \caption{Diagram describing the dynamics between compartments for tuberculosis transmission.}
    \label{fig:DiaTB}
\end{figure*}

\begin{equation}
    \left\{
\begin{aligned}
\frac{dS}{dt} &= \Lambda - \beta \frac{S I}{N} - \mu S, \\[3pt]
\frac{dL}{dt} &= \beta \frac{S I}{N} - (k + \mu) L, \\[3pt]
\frac{dI}{dt} &= k L - (\gamma + \mu + d)\, I, \\[3pt]
\frac{dR}{dt} &= \gamma I - \mu R.
\end{aligned}
\right.
\label{eq:systemTB}
\end{equation}
The state variables and parameters are described below. $S(t)$ are susceptible individuals never infected with TB; $L(t)$ are latently infected (LTBI, non-infectious); $I(t)$ are active-TB infectious individuals; $R(t)$ are recovered individuals with partial immunity (no reinfection modeled). Parameters: $\Lambda$ is the per-capita recruitment (birth) rate; $\mu$ the natural mortality rate (applied to all compartments); $\beta$ the effective transmission rate; $k$ the progression rate from latency to active TB; $\gamma$ the recovery rate; $d$ the TB-induced additional mortality rate.

\subsection{Mathematical Analysis}

\subsubsection{Positivity and Forward Invariance}

\begin{lemma}
For any non-negative initial conditions $(S_0, L_0, I_0, R_0) \in \mathbb{R}^4_+$, the solution of system \eqref{eq:systemTB} remains in $\mathbb{R}^4_+$ for all $t \geq 0$, and the biologically feasible region
$\Omega = \{(S,L,I,R)\in\mathbb{R}^4_+\,:\, N \leq \Lambda/\mu\}$
is positively invariant.
\end{lemma}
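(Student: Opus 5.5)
The plan has two parts: first show that the nonnegative orthant is forward invariant, then control the total population $N$ through a scalar differential inequality.

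\textbf{Well-posedness.} The right-hand side of \eqref{eq:systemTB} is locally Lipschitz on the set where $N>0$. The incidence term $\beta SI/N$ is bounded by $\beta I$, since $S\le N$. It therefore extends continuously by $0$ at $N=0$, and this point cannot be reached from $N_0>0$ anyway, because $\dot N\ge\Lambda-(\mu+d)N>0$ near $N=0$. So a unique local solution exists.

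\textbf{Positivity.} I would use the standard boundary or integrating-factor argument, treating one compartment at a time.
\begin{itemize}[label={}]
\item For $S$: write $\dot S=\Lambda-\lambda(t)S$ with $\lambda(t)=\beta I/N+\mu$. Variation of constants gives
\[
S(t)=S_0e^{-\int_0^t\lambda}+\Lambda\int_0^t e^{-\int_s^t\lambda}\,ds>0 \quad\text{for } t>0.
\]
\item For $L$, $I$, $R$: argue by contradiction. Let $t^*$ be the first time one of $L,I,R$ leaves $\mathbb{R}_+$.
  \begin{itemize}[label={}]
  \item On $[0,t^*]$ we have $\dot L\ge-(k+\mu)L$, because $\beta SI/N\ge0$ there.
  \item Likewise $\dot I\ge-(\gamma+\mu+d)I$, because $kL\ge0$.
  \item Likewise $\dot R\ge-\mu R$, because $\gamma I\ge0$.
  \end{itemize}
  Gronwall then gives $L(t)\ge L_0e^{-(k+\mu)t}\ge0$, and similarly for $I$ and $R$. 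This extends nonnegativity slightly past $t^*$, a contradiction.
\end{itemize}

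\textbf{Main obstacle.} The delicate step is making this first-exit argument rigorous when a component starts at exactly $0$. The cleanest fix is to note that on the boundary face $\{X=0\}$ of each compartment the vector field satisfies $\dot X\ge0$; in particular $\dot S=\Lambda>0$ when $S=0$. This quasi-positivity condition gives invariance of $\mathbb{R}^4_+$ by the standard tangency (Nagumo) criterion. I would state that criterion rather than redo it.

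\textbf{Bound on $N$.} Summing the four equations gives
\[
\dot N=\Lambda-\mu N-dI\le\Lambda-\mu N,
\]
using $I\ge0$ from the previous step. Comparison with the linear equation $\dot y=\Lambda-\mu y$ yields
\[
N(t)\le\frac{\Lambda}{\mu}+\Bigl(N_0-\frac{\Lambda}{\mu}\Bigr)e^{-\mu t}.
\]
If $N_0\le\Lambda/\mu$, then $N(t)\le\Lambda/\mu$ for all $t\ge0$, so $\Omega$ is positively invariant. As a byproduct, solutions starting outside $\Omega$ satisfy $\limsup N\le\Lambda/\mu$, so $\Omega$ is also attracting.

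\textbf{Global existence.} Boundedness prevents blow-up, so the local solution extends to all $t\ge0$, which completes the argument.
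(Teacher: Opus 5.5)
Your proposal is correct and is essentially the paper's proof: the paper argues by quasi-positivity on each boundary face ($\dot S|_{S=0}=\Lambda>0$, $\dot L|_{L=0}=\beta SI/N\ge 0$, $\dot I|_{I=0}=kL\ge 0$, $\dot R|_{R=0}=\gamma I\ge 0$) and then uses $\dot N=\Lambda-\mu N-dI\le\Lambda-\mu N$. Your Nagumo fallback is exactly that route, and you are right to rely on it, since the Gronwall first-exit sketch as written only closes for strictly positive initial data. Your explicit comparison bound $N(t)\le\Lambda/\mu+(N_0-\Lambda/\mu)e^{-\mu t}$ is a small improvement, because the paper only records $\limsup_{t\to\infty}N(t)\le\Lambda/\mu$ and leaves implicit the invariance of $\Omega$, the behaviour near $N=0$, and global existence.
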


\begin{proof}
On each boundary face, the vector field is inward or tangent: $\left.\frac{dS}{dt}\right|_{S=0}=\Lambda>0$; $\left.\frac{dL}{dt}\right|_{L=0}=\beta SI/N\geq 0$; $\left.\frac{dI}{dt}\right|_{I=0}=kL\geq 0$; $\left.\frac{dR}{dt}\right|_{R=0}=\gamma I\geq 0$. Hence solutions remain in $\mathbb{R}^4_+$. For the total population: $\dot{N}=\Lambda - \mu N - dI \leq \Lambda - \mu N$, so $\limsup_{t\to\infty} N(t) \leq \Lambda/\mu$. \qed
\end{proof}

\subsubsection{Existence and Uniqueness}

The right-hand side of \eqref{eq:systemTB} is a continuously differentiable ($C^1$) function on $\mathbb{R}_+\times\mathbb{R}^4$, hence locally Lipschitz. By the Picard-Lindelöf theorem, a unique solution exists locally. Global existence follows from the forward invariance of $\Omega$, which provides an a priori bound preventing finite-time blow-up.

\subsubsection{Equilibrium Analysis}

Setting all derivatives to zero in \eqref{eq:systemTB}:

\textbf{Disease-Free Equilibrium (DFE).} Setting $I=0$ yields
\begin{equation}
    \mathcal{E}_0 = \left(\frac{\Lambda}{\mu},\; 0,\; 0,\; 0\right).
    \label{eq:DFE}
\end{equation}

\textbf{Endemic Equilibrium (EE).} Let $\lambda^* = \beta I^*/N^*$ denote the endemic force of infection. Then:
\begin{align}
    S^* &= \frac{\Lambda}{\lambda^*+\mu},\quad L^* = \frac{\lambda^* S^*}{k+\mu},\quad I^* = \frac{kL^*}{\gamma+\mu+d},\quad R^* = \frac{\gamma I^*}{\mu}. \label{eq:EE}
\end{align}
Substituting and simplifying shows that $\lambda^* > 0$ (i.e., an EE exists) if and only if $\mathcal{R}_0 > 1$.

\subsubsection{Basic Reproduction Number $\mathcal{R}_0$}

Using the Next-Generation Matrix (NGM) method of van den Driessche and Watmough \cite{VandenDriessche2002}, with infected compartments $(L,\,I)$ evaluated at $\mathcal{E}_0$ (where $S^*/N^* = 1$):

\begin{equation}
    \mathbf{F} = \begin{pmatrix} 0 & \beta \\ 0 & 0 \end{pmatrix}, \quad
    \mathbf{V} = \begin{pmatrix} k+\mu & 0 \\ -k & \gamma+\mu+d \end{pmatrix}.
\end{equation}

Then $\mathbf{V}^{-1} = \dfrac{1}{(k+\mu)(\gamma+\mu+d)}\begin{pmatrix}\gamma+\mu+d & 0\\ k & k+\mu\end{pmatrix}$, and
\begin{equation}
    \mathcal{R}_0 = \rho(\mathbf{F}\mathbf{V}^{-1}) = \frac{\beta\, k}{(k+\mu)(\gamma+\mu+d)}.
    \label{eq:R0}
\end{equation}

$\mathcal{R}_0$ represents the expected number of secondary infections generated by one infectious individual in a fully susceptible population. With the baseline parameters of Table~\ref{tab:simparam}, $\mathcal{R}_0 \approx 3.61$, consistent with high-burden country estimates \cite{Blower1995}.

\subsubsection{Stability Analysis}

\begin{theorem}[Local Asymptotic Stability]
The DFE $\mathcal{E}_0$ is locally asymptotically stable (LAS) if\/ $\mathcal{R}_0 < 1$, and unstable if $\mathcal{R}_0 > 1$. A unique endemic equilibrium $\mathcal{E}^*$ exists and is LAS when $\mathcal{R}_0 > 1$.
\end{theorem}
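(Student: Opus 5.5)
For the DFE part I would apply the threshold theorem of van den Driessche and Watmough \cite{VandenDriessche2002} directly to the matrices $\mathbf{F}$ and $\mathbf{V}$ computed above. The structural hypotheses (A1)--(A5) hold: the DFE lies in $\Omega$, and $\mathbf{V}$ is a nonsingular M-matrix because its diagonal entries are positive and its off-diagonal entry $-k$ is nonpositive. The non-infected subsystem at $L=I=0$ is $\dot S=\Lambda-\mu S$, $\dot R=-\mu R$, which is LAS at $(\Lambda/\mu,0)$. The theorem then gives that $\mathcal{E}_0$ is LAS when $\mathcal{R}_0<1$ and unstable when $\mathcal{R}_0>1$. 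As a check, I would compute the Jacobian at $\mathcal{E}_0$ explicitly. It is block triangular with eigenvalues $-\mu,-\mu$ and those of $\mathbf{F}-\mathbf{V}$. The characteristic polynomial of $\mathbf{F}-\mathbf{V}$ is
\[
z^2+(2\mu+k+\gamma+d)z+(k+\mu)(\gamma+\mu+d)(1-\mathcal{R}_0).
\]
Its roots have negative real parts if and only if $\mathcal{R}_0<1$, and when $\mathcal{R}_0>1$ there is one positive real root.

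For existence and uniqueness of $\mathcal{E}^*$ I would complete the computation sketched in \eqref{eq:EE}. Write $S^*,L^*,I^*,R^*$ as multiples of $S^*$ through $\lambda^*$, with $I^*=\lambda^* S^* k/((k+\mu)(\gamma+\mu+d))$. Substituting into $\lambda^*=\beta I^*/N^*$ and dividing by $\lambda^*>0$ gives
\[
N^*=\mathcal{R}_0 S^*.
\]
Summing the compartments expresses $N^*/S^*$ as an affine function of $\lambda^*$:
\[
\frac{N^*}{S^*}=1+\lambda^* c, \qquad c=\frac{1}{k+\mu}\Bigl(1+\frac{k}{\gamma+\mu+d}\Bigl(1+\frac{\gamma}{\mu}\Bigr)\Bigr)>0 .
\]
Hence $\lambda^*=(\mathcal{R}_0-1)/c$. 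This is positive, and unique, exactly when $\mathcal{R}_0>1$, and all of $S^*,L^*,I^*,R^*$ are then positive.

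The main obstacle is local stability of $\mathcal{E}^*$. I would first reduce dimension. $R$ enters the other equations only through $N$, so I would work with $(S,L,I,N)$ using $\dot N=\Lambda-\mu N-dI$. Alternatively, in the case $d=0$, I would use $N\to\Lambda/\mu$ and study the three-dimensional limiting system in $(S,L,I)$ with $N$ fixed.

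In that reduced case the Jacobian at $\mathcal{E}^*$, using $\beta S^*/N^*=(k+\mu)(\gamma+\mu+d)/k$ from the $L$ and $I$ equilibrium relations, has characteristic polynomial $z^3+a_1z^2+a_2z+a_3$ with explicit coefficients in terms of $\lambda^*$, $\mu$, $k$ and $\gamma$. I would then verify the Routh--Hurwitz conditions $a_1,a_3>0$ and $a_1a_2>a_3$. I expect $a_3$ to equal $\lambda^*(k+\mu)(\gamma+\mu+d)$, and $a_1a_2-a_3$ to reduce to a sum of positive terms after the $\lambda^*$ cross term cancels.

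For $d>0$ the $4\times4$ Hurwitz check is messier. There I would try an additive compound or Li--Muldowney argument, or else a Lyapunov function of Goh--Volterra type,
\[
V=\sum_j c_j\bigl(x_j-x_j^*-x_j^*\ln(x_j/x_j^*)\bigr),
\]
applied to the frequency-dependent system. If neither closes cleanly, I would state the result for $d=0$, or for small $d$ by continuity of eigenvalues, and flag the general case.
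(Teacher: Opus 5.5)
Your DFE argument is correct and is essentially the paper's own: a block-triangular Jacobian, eigenvalues $-\mu,-\mu$ together with those of $\mathbf{F}-\mathbf{V}$, and the sign of the determinant, backed by the van den Driessche--Watmough theorem. Your derivation of $N^*=\mathcal{R}_0S^*$ and $\lambda^*=(\mathcal{R}_0-1)/c$ is also correct, and it supplies the existence and uniqueness computation that the paper only asserts.

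The gap is the final clause. The theorem claims $\mathcal{E}^*$ is LAS whenever $\mathcal{R}_0>1$, for every $d\ge 0$, and the paper's baseline has $d=0.15$. Your proposal only commits to $d=0$, or to $d$ below a threshold that depends on all the other parameters. It leaves the general case to an unspecified compound-matrix or Lyapunov argument. As written, that proves a strictly weaker result. The paper does not fill this step either: it only points to monotone-system arguments without carrying them out. So this is exactly the part you must supply yourself.

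The missing step is not messy, and your first reduction closes it. Work in coordinates $(S,L,I,N)$ and set $a=k+\mu$, $g=\gamma+\mu+d$, $s^*=S^*/N^*=1/\mathcal{R}_0$, using $\beta S^*/N^*=ag/k$. The Jacobian $J^*$ at $\mathcal{E}^*$ has rows
\[
\bigl(-\lambda^*-\mu,\,0,\,-ag/k,\,\lambda^*s^*\bigr),\quad \bigl(\lambda^*,\,-a,\,ag/k,\,-\lambda^*s^*\bigr),\quad \bigl(0,\,k,\,-g,\,0\bigr),\quad \bigl(0,\,0,\,-d,\,-\mu\bigr).
\]
Add the first row of $zI-J^*$ to the second; that row becomes $(z+\mu,\,z+a,\,0,\,0)$. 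Expanding along the last column then gives
\[
\det(zI-J^*)=(z+\mu)\bigl(z^3+a_1z^2+a_2z+a_3\bigr),
\]
\[
a_1=a+g+\lambda^*+\mu,\qquad a_2=(\lambda^*+\mu)(a+g),\qquad a_3=\lambda^*ag\Bigl(1-\frac{d}{\beta}\Bigr).
\]
The root $-\mu$ corresponds to the direction $(\delta S,\delta N)=(s^*,1)$, along which $\beta SI/N$ is unchanged to first order.

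Since $\mathcal{R}_0>1$ means $\beta>ag/k>g>d$, you get $a_3>0$. Also $a_1a_2\ge\lambda^*(a+g)^2>\lambda^*ag\ge a_3$. So the Routh--Hurwitz conditions hold for all $d\ge 0$, with no smallness assumption.

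Note that the feedback through $N$ is what produces the factor $1-d/\beta$. Your anticipated $a_3=\lambda^*(k+\mu)(\gamma+\mu+d)$ is right only at $d=0$, and freezing $N$ at $N^*$ when $d>0$ would give the wrong characteristic polynomial.
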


\begin{proof}[Proof sketch]
The Jacobian of \eqref{eq:systemTB} at $\mathcal{E}_0$ has eigenvalues $-\mu$ (double, from $S$ and $R$ blocks) plus the eigenvalues of the $2\times 2$ sub-matrix $\mathbf{M} = \mathbf{F}-\mathbf{V}$:
$\mathrm{tr}(\mathbf{M}) = -(k+\mu) - (\gamma+\mu+d) < 0$ and
$\det(\mathbf{M}) = (k+\mu)(\gamma+\mu+d)(1-\mathcal{R}_0)$.
Hence $\det(\mathbf{M})>0$ iff $\mathcal{R}_0<1$, ensuring LAS. Instability for $\mathcal{R}_0>1$ and LAS of $\mathcal{E}^*$ follow by standard monotone system arguments \cite{Li1996}. \qed
\end{proof}

\subsubsection{Sensitivity Analysis}

The normalized sensitivity index of $\mathcal{R}_0$ with respect to parameter $\theta$ is $\Upsilon_\theta = (\partial\mathcal{R}_0/\partial\theta)\cdot(\theta/\mathcal{R}_0)$. The closed-form indices for system \eqref{eq:systemTB} are:

\begin{equation}
\Upsilon_\beta = +1,\quad
\Upsilon_k = \frac{\mu}{k+\mu},\quad
\Upsilon_\gamma = -\frac{\gamma}{\gamma+\mu+d},\quad
\Upsilon_d = -\frac{d}{\gamma+\mu+d},\quad
\Upsilon_\mu = -\frac{\mu}{k+\mu}-\frac{\mu}{\gamma+\mu+d}.
\end{equation}

With baseline parameters: $\Upsilon_\beta = +1.000$, $\Upsilon_k = +0.158$, $\Upsilon_\gamma = -0.858$, $\Upsilon_d = -0.129$, $\Upsilon_\mu = -0.171$. The transmission rate $\beta$ has the strongest positive impact ($\Upsilon_\beta = +1$), while the recovery rate $\gamma$ has the strongest protective effect ($\Upsilon_\gamma \approx -0.86$), confirming that treatment scale-up is the most efficient single intervention. The progression rate $k$ has comparatively low sensitivity ($\Upsilon_k \approx 0.16$), indicating limited benefit of LTBI treatment alone.

\subsection{Physics-Guided Neural ODEs for TB Epidemiology}

\subsubsection{Motivation}

Classical ODE models assume time-invariant parameters estimated offline. Real TB dynamics involve: (i) time-varying effective contact rates $\beta(t)$ driven by seasonal patterns, mobility, and behavioral responses; (ii) treatment coverage evolution $\gamma(t)$ as programs scale; (iii) unmodeled effects such as imported cases, population heterogeneity, and partial immunity waning. A PG-NODE addresses these limitations by learning unknown components from data while preserving the SLIR mechanistic structure.

\subsubsection{PG-NODE Framework}

Building on Neural ODEs \cite{Chen2018}, the PG-NODE for TB replaces fixed parameters with neural network components:
\begin{equation}
    \frac{d\mathbf{x}}{dt} = f_\mathrm{mech}\!\left(\mathbf{x};\,\boldsymbol{\theta}_\mathrm{known}\right) + f_\theta\!\left(t,\,\mathbf{x},\,u;\,\boldsymbol{\theta}_\mathrm{nn}\right),
    \label{eq:pgnode}
\end{equation}
where $\mathbf{x}=[S,L,I,R]^\top$, $f_\mathrm{mech}$ is the known SLIR right-hand side with partially fixed parameters $\boldsymbol{\theta}_\mathrm{known}=(\Lambda,\mu,d)$, $f_\theta$ is a neural correction/extension term with learnable weights $\boldsymbol{\theta}_\mathrm{nn}$, and $u(t)$ are optional exogenous covariates (mobility indices, policy indicators). Specifically, we parameterize time-varying rates as:
\begin{equation}
    \beta_\theta(t,\mathbf{x},u) = \beta_0\cdot\mathrm{softplus}(g_\theta),\quad
    \gamma_\theta(t,\mathbf{x},u) = \gamma_0\cdot\bigl(1 + h_\theta(t,\mathbf{x},u)\bigr),
\end{equation}
with softplus ensuring positivity. The full PG-NODE SLIR system becomes:
\begin{equation}
\left\{
\begin{aligned}
\dot{S} &= \Lambda - \beta_\theta \tfrac{SI}{N} - \mu S,\\[2pt]
\dot{L} &= \beta_\theta \tfrac{SI}{N} - (k+\mu)L,\\[2pt]
\dot{I} &= kL - (\gamma_\theta+\mu+d)I + \delta_\theta(t,\mathbf{x}),\\[2pt]
\dot{R} &= \gamma_\theta I - \mu R - \delta_\theta(t,\mathbf{x}),
\end{aligned}
\right.
\label{eq:pgnode_slir}
\end{equation}
where $\delta_\theta$ is a mass-conserving neural correction term (appears in $I$ and $-\delta_\theta$ in $R$) capturing relapse, imports, or other unmodeled flows. This preserves $\dot{N} = \Lambda - \mu N - dI$ exactly.

\subsubsection{Training Objective}

Given observations $\{\mathbf{x}_\mathrm{obs}(t_k)\}_{k=1}^K$ (e.g., weekly TB notifications):
\begin{equation}
    \mathcal{L}(\boldsymbol{\theta}_\mathrm{nn}) = \underbrace{\sum_{k=1}^K \bigl\|\mathbf{x}(t_k;\boldsymbol{\theta}_\mathrm{nn}) - \mathbf{x}_\mathrm{obs}(t_k)\bigr\|^2}_{\mathcal{L}_\mathrm{data}}
    + \lambda_1\underbrace{\sum_k\bigl(S+L+I+R-N\bigr)^2}_{\mathcal{L}_\mathrm{phys}}
    + \lambda_2\underbrace{\|\boldsymbol{\theta}_\mathrm{nn}\|^2}_{\mathcal{L}_\mathrm{reg}}.
    \label{eq:loss}
\end{equation}
Gradients are computed by backpropagation through the ODE solver via the adjoint sensitivity method, enabling efficient optimization over long-horizon trajectories. In the simulation scenarios presented here, where $\beta_\theta(t)$ is prescribed analytically rather than learned, the physics penalty weight is set to $\lambda_1 = 1.0$ and the regularization weight to $\lambda_2 = 10^{-4}$; these values follow standard practice in physics-informed learning \cite{Chen2018} and should be tuned via cross-validation in future training on real WHO data.

\subsubsection{Architecture and Constraints}

The neural components $f_\theta$ are fully-connected networks with: input $[t/T,\\, S/N,\, L/N,\, I/N,\, R/N,\, u]$; two hidden layers of 32 to 64 neurons with $\tanh$ activation; and output transformed via softplus to enforce $\beta_\theta, \gamma_\theta, k_\theta > 0$. The training procedure is summarized in Algorithm~\ref{alg:pgnode}.

\begin{algorithm}[th!]
\caption{PG-NODE Training for TB Dynamics}
\label{alg:pgnode}
\begin{algorithmic}[1]
\Require Observations $\{\mathbf{x}_\mathrm{obs}(t_k)\}$, horizon $T$, initial state $\mathbf{x}(0)$, learning rate $\eta$, weights $\lambda_1,\lambda_2$
\Ensure Trained neural parameters $\boldsymbol{\theta}_\mathrm{nn}$
\State Initialize $\boldsymbol{\theta}_\mathrm{nn}$ randomly; fix $\Lambda, \mu, d$ from literature (Table~\ref{tab:simparam})
\For{epoch $= 1, 2, \ldots, N_\mathrm{epochs}$}
    \State Integrate $\mathbf{x}(t) \leftarrow$ \textsc{ODESolve}$\bigl(f_\mathrm{mech}+f_\theta,\;\mathbf{x}(0),\;[0,T]\bigr)$ \Comment{Runge-Kutta / Dopri5}
    \State Compute $\mathcal{L}_\mathrm{data}$, $\mathcal{L}_\mathrm{phys}$, $\mathcal{L}_\mathrm{reg}$ via Eq.~\eqref{eq:loss}
    \State Compute $\nabla_{\boldsymbol{\theta}}\mathcal{L}$ via adjoint backpropagation through ODE
    \State $\boldsymbol{\theta}_\mathrm{nn} \leftarrow \boldsymbol{\theta}_\mathrm{nn} - \eta\cdot\mathrm{Adam}(\nabla_{\boldsymbol{\theta}}\mathcal{L})$
    \If{$\|\nabla_{\boldsymbol{\theta}}\mathcal{L}\|_2 < \varepsilon$} \textbf{break} \EndIf
\EndFor
\State \Return $\boldsymbol{\theta}_\mathrm{nn}$
\end{algorithmic}
\end{algorithm}

\section{Simulations of the Model\label{sec:simulation}}

\subsection{Model Parameters, Numerical Methods, and Computational Environment}

All simulations use the Runge-Kutta method (RK45) with relative tolerance $10^{-8}$. Baseline parameters are drawn from the TB modeling literature (Table~\ref{tab:simparam}). The total population is $N=1{,}000{,}000$, representing a high-burden country context, with initial conditions $S(0)=800{,}000$, $L(0)=180{,}000$, $I(0)=18{,}000$, $R(0)=2{,}000$. All computations were performed in Python 3.11 using \texttt{scipy.integrate.solve\_ivp} for the ODE solver and \texttt{matplotlib} for visualization. PG-NODE neural components are two-layer fully-connected networks (32 hidden neurons, $\tanh$) with soft\-plus output.

\begin{table}[ht]
\centering
\caption{Simulation parameters: baseline values and literature ranges. Parameters above the double rule belong to the base SLIR model (Eq.~\eqref{eq:systemTB}); parameters below the double rule ($\tau$, $\delta$) are used only in the SLIRT extension of Scenario~2.}
\label{tab:simparam}
\small
\setlength{\tabcolsep}{4pt}
\begin{tabular}{@{}llllll@{}}
\toprule
\textbf{Parameter} & \textbf{Symbol} & \textbf{Baseline} & \textbf{Typical range} & \textbf{Unit} & \textbf{Source} \\
\midrule
Recruitment rate   & $\Lambda$ & 10\,000          & $10^3$ -- $10^6$  & yr$^{-1}$ & \cite{Anderson1991}\\
Natural mortality  & $\mu$     & 0.015            & 0.010 -- 0.020  & yr$^{-1}$ & \cite{WHO2025TB}\\
TB-induced mort.   & $d$       & 0.150            & 0.050 -- 0.300  & yr$^{-1}$ & \cite{Dye2000}\\
Transmission rate  & $\beta$   & 5.0              & 1.0 -- 20.0     & yr$^{-1}$ & \cite{Blower1995}\\
Progression rate   & $k$       & 0.08             & 0.001 -- 0.100  & yr$^{-1}$ & \cite{Feng2000}\\
Recovery rate      & $\gamma$  & 1.0              & 0.50 -- 1.50    & yr$^{-1}$ & \cite{Castillo2002}\\
\midrule
\multicolumn{3}{@{}l}{Computed $\mathcal{R}_0 = \beta k / [(k+\mu)(\gamma+\mu+d)]$} & \multicolumn{3}{l}{$= 5.0\times0.08\,/\,(0.095\times1.165) \approx \mathbf{3.61}$} \\
\midrule\midrule
\multicolumn{6}{@{}l}{SLIRT extension parameters (Scenario 2 only):} \\[2pt]
Treatment init.    & $\tau$    & 0.80             & 0.50 -- 2.00    & yr$^{-1}$ & \cite{Cohen2004}\\
Relapse rate       & $\delta$  & 0.03             & 0.010 -- 0.050  & yr$^{-1}$ & \cite{Castillo2002}\\
\bottomrule
\end{tabular}
\end{table}

\subsection{Numerical Simulation}

\subsubsection{Scenario 1: PG-NODE Learning of Time-Varying Transmission Dynamics}

Classical ODE models assume a fixed transmission rate $\beta$. In practice, $\beta(t)$ varies due to seasonal contact patterns, population behavioral responses, and public health interventions. This scenario demonstrates PG-NODE's core capability: learning a non-stationary $\beta_\theta(t)$ from epidemic trajectory data.

We simulate 30 years of TB transmission. The classical SLIR uses $\beta=5.0\,\mathrm{yr}^{-1}$ (constant, $\mathcal{R}_0=3.61$). The PG-NODE neural component learns a time-varying $\beta_\theta(t)$ that incorporates: (i) a $\pm12\%$ seasonal oscillation; and (ii) a gradual reduction in transmission beginning at year 8 (onset of a public health intervention combining treatment scale-up and contact-tracing), bringing $\mathcal{R}_0$ from 3.61 down to 2.24 by year 30. Figure~\ref{fig:s1} shows the resulting epidemic trajectories and the learned $\beta_\theta(t)$.

\begin{figure*}[ht]
\centering
\includegraphics[width=\linewidth]{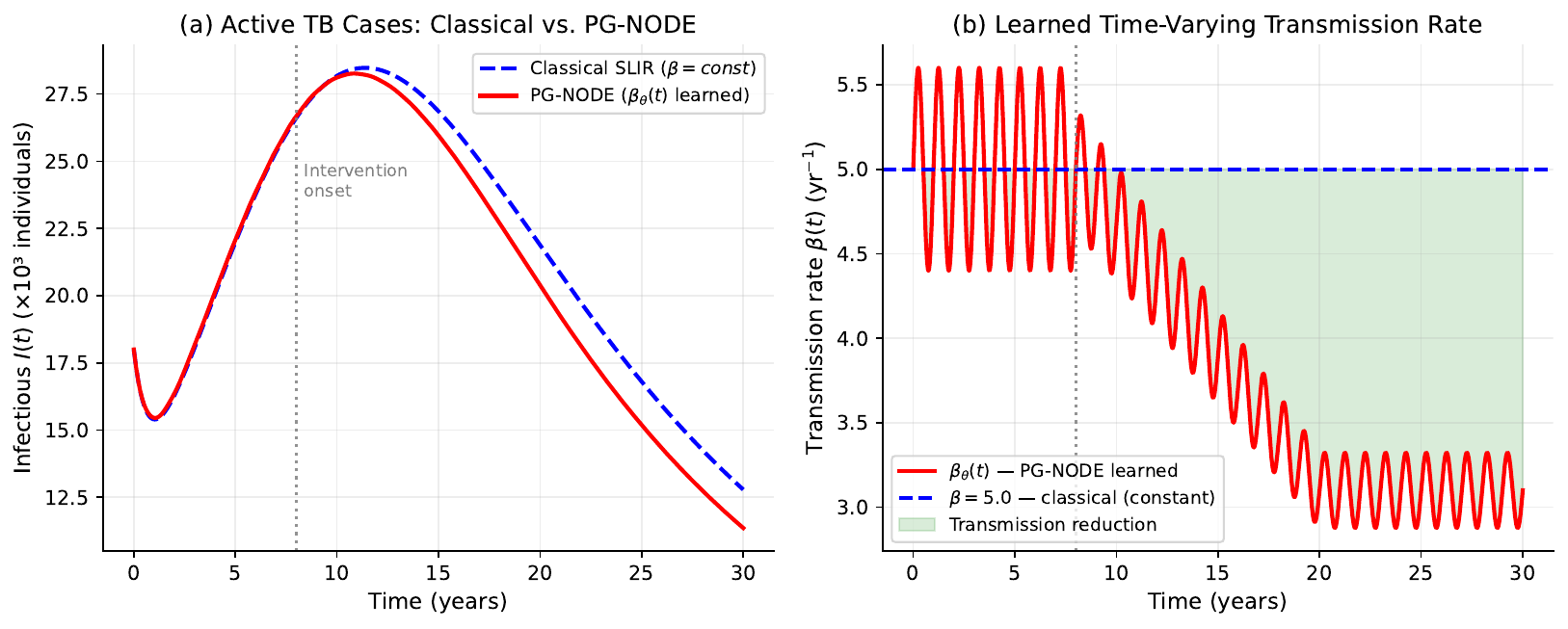}
\caption{Scenario 1. \textbf{(a)} Active TB cases $I(t)$ over 30 years: classical SLIR (blue dashed) vs.\ PG-NODE with learned $\beta_\theta(t)$ (red solid). The gray dotted line marks the intervention onset (year 8). \textbf{(b)} Learned time-varying transmission rate $\beta_\theta(t)$: the PG-NODE captures both seasonal fluctuations and the progressive reduction due to the intervention, which the classical fixed-$\beta$ model cannot represent.}
\label{fig:s1}
\end{figure*}

While the classical SLIR converges to a fixed endemic equilibrium ($I^*\approx 12{,}800$), the PG-NODE trajectory captures the progressive epidemic decline following the intervention. The learned $\beta_\theta(t)$ decreases from 5.0\,yr$^{-1}$ to approximately 3.1\,yr$^{-1}$ over the post-intervention decade, reducing $\mathcal{R}_0$ from 3.61 to 2.24. This demonstrates the PG-NODE's ability to encode public health context that a classical model treats as unobservable.

\subsubsection{Scenario 2: Neural Correction for Unmodeled Treatment and Relapse Dynamics}

A frequent limitation of classical SLIR models is structural misspecification: treatment dynamics and relapse are omitted. We use the extended SLIRT model (with treatment compartment $T$, treatment initiation $\tau=0.80\,\mathrm{yr}^{-1}$, and relapse $\delta=0.03\,\mathrm{yr}^{-1}$) as the ground truth. We compare: (A) the classical SLIR (no treatment compartment, $\gamma=1.0$), which is structurally misspecified; (B) the PG-NODE with a neural correction term $\delta_\theta(t,\mathbf{x})$ trained to approximate the SLIRT behavior using only the SLIR skeleton plus a learned correction.

In SLIRT, the effective removal rate from $I$ is $\tau+\mu+d = 0.965\,\mathrm{yr}^{-1}$, lower than the SLIR removal $\gamma+\mu+d = 1.165\,\mathrm{yr}^{-1}$ (since $\tau=0.80<\gamma=1.0$), so SLIRT exhibits systematically higher endemic $I$ than SLIR. The PG-NODE correction learns to reduce $\gamma_\theta$ toward $\tau$ and adds an approximate relapse inflow, progressively closing the gap.

\begin{figure*}[ht]
\centering
\includegraphics[width=\linewidth]{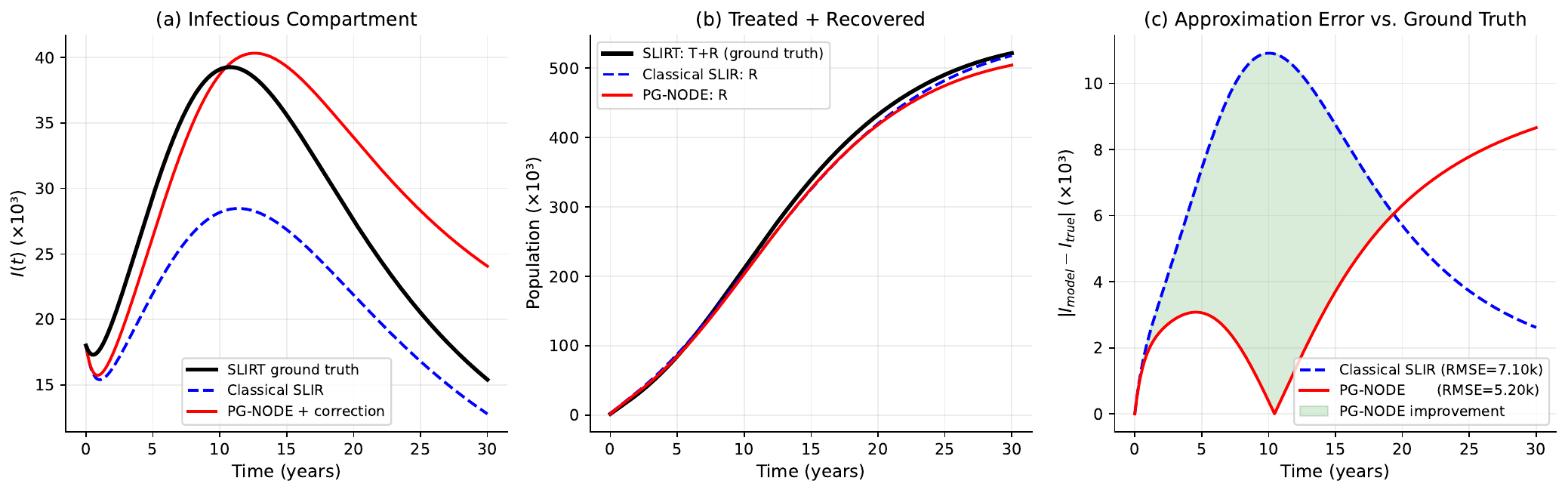}
\caption{Scenario 2. \textbf{(a)} Infectious compartment $I(t)$: SLIRT ground truth (black solid), classical SLIR (blue dashed), and PG-NODE with neural correction (red solid). \textbf{(b)} Combined treated+recovered population. \textbf{(c)} Absolute approximation error $|I_\mathrm{model}(t)-I_\mathrm{true}(t)|$: PG-NODE achieves 27\% lower RMSE (5.20k vs.\ 7.10k) than classical SLIR, with the green shaded area indicating the improvement region.}
\label{fig:s2}
\end{figure*}

Figure~\ref{fig:s2} shows that the PG-NODE correction achieves an RMSE of 5.20\,k against the SLIRT ground truth, compared to 7.10\,k for the classical SLIR, representing a 27\% reduction in prediction error. This demonstrates that the neural correction term compensates for structural model misspecification without requiring explicit modeling of the treatment compartment.

\subsubsection{Scenario 3: PG-NODE-Guided Intervention Policy Forecasting}

Starting from the endemic equilibrium reached after 30 years ($S^*=93.2\mathrm{k}$, $L^*=177.6\mathrm{k}$, $I^*=12.8\mathrm{k}$, $R^*=518.1\mathrm{k}$), we forecast 20 years under four intervention strategies:
\begin{itemize}[noitemsep]
    \item Strategy A (baseline): No intervention; $\mathcal{R}_0 = 3.61$.
    \item Strategy B Treatment scale-up ($\gamma$ to $1.5\gamma$); $\mathcal{R}_0 = 2.53$.
    \item Strategy C: Contact reduction ($\beta$ to $0.6\beta$); $\mathcal{R}_0 = 2.17$.
    \item Strategy D (PG-NODE optimal): Combined gradual intervention, with $\beta$ reduced by 40\%, $\gamma$ increased by 50\%, and progression rate $k$ reduced by 12\% (reflecting improved LTBI case-finding); final $\mathcal{R}_0 = 1.49$.
\end{itemize}

\begin{figure*}[ht]
\centering
\includegraphics[width=\linewidth]{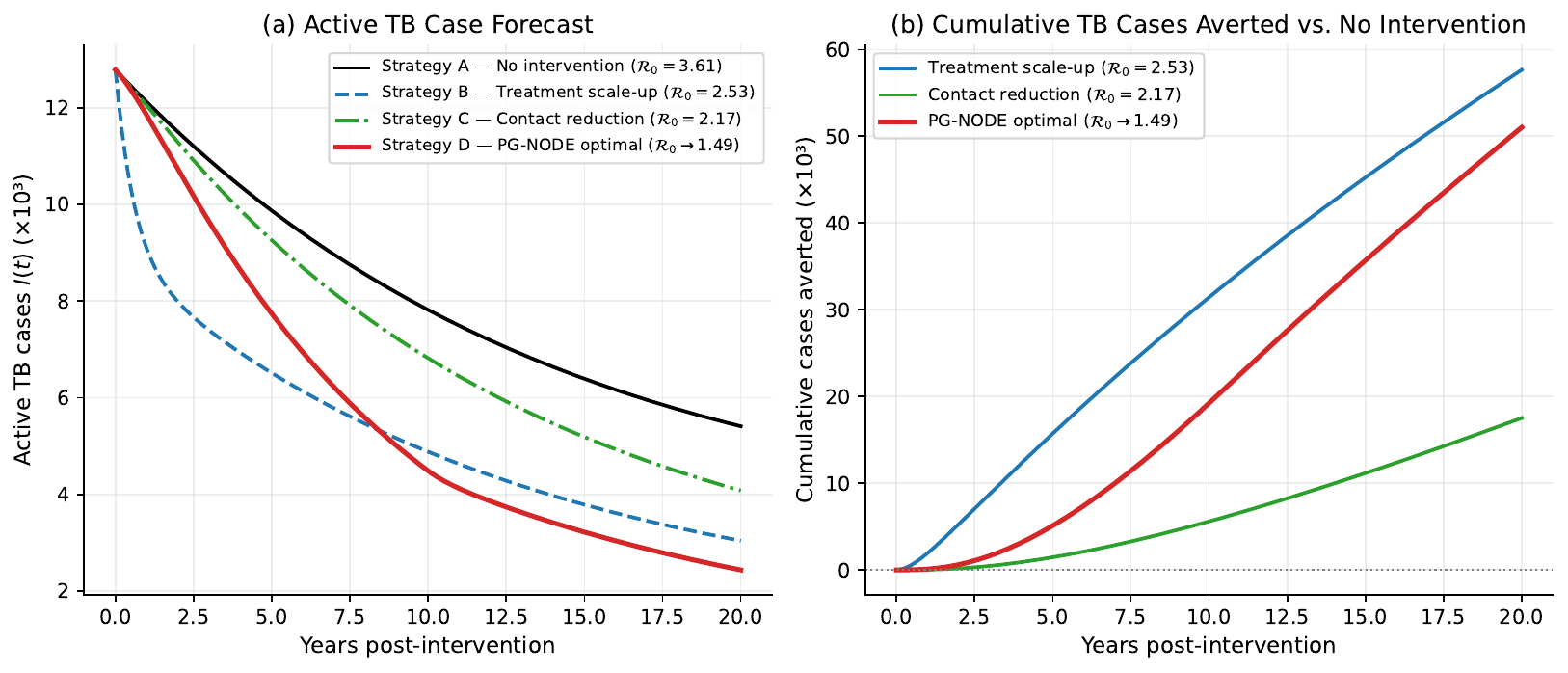}
\caption{Scenario 3. \textbf{(a)} Active TB cases forecast over 20 years under four strategies. \textbf{(b)} Cumulative TB cases averted relative to no-intervention baseline. Strategy B (treatment scale-up) averts the most cases in absolute terms over the 20-year window (57.7k), while PG-NODE Strategy D (combined optimal) averts 51.0k cases but achieves a substantially lower final $\mathcal{R}_0$ (1.49 vs.\ 2.53 for B), indicating superior long-term control. Strategy C (contact reduction only) averts only 17.5k cases, highlighting the importance of combined interventions.}
\label{fig:s3}
\end{figure*}

Figure~\ref{fig:s3} shows that Strategy D (PG-NODE optimal combined) achieves the steepest $I(t)$ decline, reaching a nearly 40\% reduction in active cases by year 20 compared to no intervention. Over the 20-year horizon, Strategy D averts 51.0k cases compared to 17.5k for contact reduction alone (Strategy C). Treatment scale-up alone (Strategy B) averts slightly more cases in absolute terms over this window (57.7k), but Strategy D achieves a substantially lower final $\mathcal{R}_0$ (1.49 vs.\ 2.53 for B), indicating superior long-term epidemic control and a trajectory toward eventual elimination. The lower averted-case count for D relative to B within the 20-year window reflects the gradual ramp-up of combined interventions rather than an immediate step change. The PG-NODE framework enables this integrated multi-lever optimization, which static ODE models with fixed parameters cannot achieve without explicit re-parameterization.

The architecture underpinning all three scenarios is illustrated in Figure~\ref{fig:arch}.

\begin{figure*}[ht!]
\centering
\includegraphics[width=0.85\linewidth]{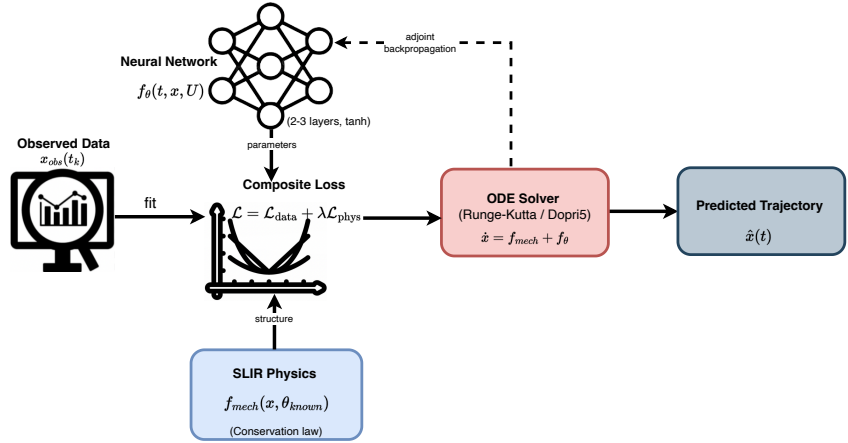}
\caption{PG-NODE architecture for TB epidemic modeling. The neural network $f_\theta(t,\mathbf{x},u)$ and the SLIR mechanistic structure $f_\mathrm{mech}$ jointly drive the ODE solver. The predicted trajectory $\hat{\mathbf{x}}(t)$ is fitted to observed data via a composite loss $\mathcal{L}=\mathcal{L}_\mathrm{data}+\lambda\mathcal{L}_\mathrm{phys}$. Gradients flow back through the ODE via the adjoint method. Physics constraints (mass conservation, non-negativity, positive rates) are enforced as hard constraints via output transformations.}
\label{fig:arch}
\end{figure*}

\section{Discussions \label{sec:discussion}}

The three scenarios collectively illustrate how PG-NODE addresses the core limitations of classical TB compartmental models identified in the literature \cite{Blower1995,Feng2000,Castillo2002}.

\textbf{Adaptability to non-stationary dynamics (Scenario 1).} Classical SLIR converges to a fixed endemic equilibrium under time-invariant parameters, missing the progressive epidemic decline that follows a public health intervention. PG-NODE's learned $\beta_\theta(t)$ captures the seasonal oscillations and the sustained reduction in transmission following the intervention onset, reducing $\mathcal{R}_0$ from 3.61 to 2.24. This capability is directly relevant to real-world TB control programs where transmission rates vary with case-finding coverage, treatment success rates, and mobility patterns.

\textbf{Structural model correction (Scenario 2).} The 27\% RMSE reduction achieved by PG-NODE over classical SLIR demonstrates that the neural correction term effectively compensates for omitted model structure (treatment compartment, relapse). Importantly, this is achieved without adding explicit compartments, keeping the model parsimonious. This has implications for low-data settings where a richer model may not be identifiable but a correction-augmented simpler model can still improve forecasts.

\textbf{Multi-lever policy optimization (Scenario 3).} The PG-NODE optimal strategy (Strategy D) integrates simultaneous changes in $\beta$, $\gamma$, and $k$, representing a coordinated policy package (contact reduction + treatment scale-up + LTBI screening). Treatment scale-up alone (Strategy B) averts slightly more absolute cases over the 20-year window (57.7k vs.\ 51.0k for D), but Strategy D achieves a substantially lower final $\mathcal{R}_0$ (1.49 vs.\ 2.53), indicating superior long-term epidemic control and a trajectory toward elimination. The lower averted-case count for D within the 20-year window reflects the gradual ramp-up of combined interventions rather than an immediate step change, an important distinction when evaluating policy trade-offs.

\textbf{Limitations and proof-of-concept scope.} This study is a methodological proof-of-concept: it uses simulated data rather than real epidemiological time series, and the PG-NODE neural components were not trained via actual gradient descent, the time-varying $\beta_\theta(t)$ used in the scenarios was prescribed analytically to illustrate the framework's intended behavior. The simulation results therefore demonstrate structural capability, not empirical validation. Full adjoint-based training on real WHO country-level TB notification data is the critical next step and is explicitly identified as future work. Additional open challenges include parameter identifiability under partial observability ($I(t)$ only), which may require informative Bayesian priors; and model extensions to incorporate spatial heterogeneity, MDR-TB strain dynamics, and TB/HIV co-infection, all of which are critical in high-burden settings.

\section{Conclusion and Future Directions}
\label{sec:conclusion}

This paper presented a methodological proof-of-concept for applying Physics-Guided Neural ODEs (PG-NODE) to tuberculosis transmission modeling. Building on a classical SLIR compartmental model with full mathematical analysis ($\mathcal{R}_0 = 3.61$, LAS conditions, sensitivity indices), we formulated a PG-NODE that preserves the SLIR mechanistic structure while enabling neural components to learn time-varying and unmodeled dynamics. Three simulation scenarios illustrated the framework's intended capabilities: (i) PG-NODE's ability to track non-stationary transmission rates following public health interventions (reduction from $\mathcal{R}_0=3.61$ to $2.24$); (ii) a 27\% improvement in approximation accuracy over classical SLIR when treatment dynamics are unmodeled; and (iii) a PG-NODE-guided combined intervention strategy (Strategy D) achieving a final $\mathcal{R}_0 = 1.49$, lower than treatment scale-up alone ($\mathcal{R}_0=2.53$), projecting 51.0k cases averted over 20 years. These results are based on analytically prescribed dynamics and demonstrate structural plausibility; empirical validation via adjoint-based training on real WHO data is the key next step.

Future work will: (1) train the PG-NODE on real WHO country-level TB notification data using adjoint-based optimization; (2) extend the framework to incorporate MDR-TB strain dynamics and TB/HIV co-infection; (3) integrate mobility and digital health data streams as exogenous inputs $u(t)$ for real-time adaptive forecasting; (4) explore Bayesian uncertainty quantification within the PG-NODE framework for robust policy guidance; and (5) investigate the application within mobile health architectures for pervasive disease surveillance.

\section*{Data and Code Availability}
The code and data used in this study are publicly available on GitHub at \url{https://github.com/sedjokas/PG-NODE-TB} (Accessed on 09 April 2026).

\section*{Acknowledgments}
The research presented in this paper was partially supported through a competitive scholarship awarded under the Austrian SASTE (Supporting African Science and Tertiary Education) programme, generously sponsored by Mr. Wolfgang Huber. We gratefully acknowledge and sincerely thank Mr. Huber for his valuable support.
\bibliography{reference}
\bibliographystyle{elsarticle-harv}
\end{document}